\newtheorem{definition}{Definition}
\newtheorem{theorem}{Theorem}
\newtheorem{lemma}{Lemma}
\title{Model Theory of Ultrafinitism II: \\
Deconstructing the Term Model\\
(First Draft)}
\author{Mirco A. Mannucci, \\HoloMathics LLC, \\mirco@holomathics.com}
\date{\today}
\begin{document}

\maketitle

\begin{abstract}
This paper presents a novel possible worlds semantics, designed to elucidate the underpinnings of ultrafinitism. By constructing a careful modification of the well-known  Kripke models for inuitionistic logic, we seek to extend our comprehension of the ultra-finite mindset. As it turns out, the passage from  standard constructivist mathematics to the ultrafinite  is in a sense an operation of deconstruction of familiar mathematical entities, most notably clear when it comes to $N$.
\end{abstract}


\section{Introduction}

\begin{quote}
Q: You want the constructivists worry about feasibility?\\
A: Yes, feasibility is the real issue. It seems to me that the constructivist movement lost the pioneering spirit. In computer science, constructivists did not play the role one could expect them to play. They remained largely satisfied with constructivity on the level of recursive functions and did not refine the notion of construction sufficiently. Of course that task is very hard and the constructive community was never very large. Still, they could have been the ones to pioneer computational complexity theory. They could have been the first to criticize the existing complexity theory for ignoring too often the feasibility issue.\\
Q: But the feasibility concept is hard. It is hard to define in theoretical terms what is feasible.\\
A: True, but this is also a challenge.

\emph{Maybe a new constructivism will arise to meet the challenge}.
\end{quote}
-- Yuri Gurevich, "Platonism, Constructivism, and Computer Proofs vs. Proofs by Hand" \cite{Gurevich}
\\
\\
In mathematical logic, Kripke models have provided a robust structure for understanding intuitionistic logic. They give us a means of grasping  the notion of truth as it is understood from an intuitionistic perspective, not as a  static immutable concept, but rather evolving along ever growing mathematical experience. By generalizing the notion of model to a 'Kripke frame' of possible worlds ordered by inclusion, Kripke structures allow us to explore how our knowledge evolves and how this impacts the validity of propositions.

.

Given the unquestionable utility of the Kripkean analysis in shedding light on the intuitionistic perspective, it is natural to ask whether somewhat similar structures, but  equipped with more stringent constraints,  could help us come to grips with the most radical version of constructivism, namely ultrafinitism (a form of radical  finitism that rejects or cast doubts on   infinite totalities, not just those  \emph{in actu}, but also  \emph{in potentia}). 

Despite its growing philosophical appeal in some quarters, \footnote{A note en passant: whereas only fifteen years ago ultrafinitism was a little more than a label for a remote land in the foundation of mathematics galaxy, in recent years some of its themes crop up in many unexpected places, for instance in the foundational musings on potentialism of Joel David Hamkins (see \cite{joel}, in a note by Field's Medal Tim Gowers on Mathoverflow (see \cite{gowers}), and  a few other spots. Why? No doubt the immense achievements of automated theorem proving, deep generative AI, and a lot of other successes in computational mathematics and computer science create a urge to understand "the mind of the  machines", which no Turing analysis can fully capture (machines do not have unlimited time, memory, computational strength, etc). But there seem to be  internal driving forces to Foundation of Mathematics itself, though not as easy to pin down (a partial clue, though, is that in  a era which is prone to multiverses, there is an internal push to make even the arithmetical structure just one of several non-isomorphic options).  Be that as it may, it is clear that the topic needs further elucidations.} ultrafinitism remains poorly understood and controversial. Up to now, it has sorely lacked both a fully convincing proof theory and a model theory.

In this paper, we present an attempt to construct such a model theory  for first-order logic -  \textbf{Esenin-Volpin models}. Esenin-Volpin models modify the Kripke framework to encompass a concept of feasibility, with the aim of providing a formal structure that can capture the key intuitions of radical constructivism. This is part of a larger project of investigating various model constructions for ultrafinitistic logic, as announced in \cite{MannucciCherubin} .

In the next installment of the series, we will  leverage the other popular semantics for constructive mathematics, namely realisability, to carve out another model theory for ultrafinitism. 

We shall make a few assumption in the following:
\begin{itemize}

\item the logic (ie the set of logical rules and axiom schemas) shall be IPC, Intuitionistic Predicate Calculus. 
\item 
We shall also assume that the underlying deduction system is the normalized LJ, with cut-free proofs (this second restriction is strictly not necessary, as there is always a super-exponential transformation which normalizes deductions via Gentzen, but will make our presentation considerably easier.)
\item 
Though the proposed framework could be adapted to handle  recursively axiomatisable theories, in the following we will only deal with finitely axiomatised ones. 

\end{itemize}

Unlike other research in bounded resources computation, for instance linear logic, here what changes is not the axioms or deduction rules, rather the very notion of deduction. It is the complexity of deduction trees that matter. As we have already pointed out in \cite{MannucciCherubin}, this step entails many consequences, for instance this one: whereas in both classical and constructivistic universes contradiction amount to  a death sentence, here it does not. Only feasibly achieved contradiction is. \footnote{ We hasten to point our that our remedy is not paraconsistent logic, ie changing somehow the logical rules (like the infamous \textit{ex falso quodlibet}). Rather, we simply consider the cost of producing a contradiction, and at what level it may impact our constructions. However, we envision a slightly different version of Volpin models which includes paraconsistent nodes, ie nodes forcing falsum. These nodes would lie above the consistency depth of the model. } 

\section{Esenin-Volpin Models: Motivation and Formal Definition}

We assume the reader to be familiar with standard Kripke semantics for intuitionistic first order logic, as it is to be found for instance  in \cite{Moshkovakis}. and \cite{Kuznetzov}. As a matter of fact, in the ensuing discussion, we will use the material in Kuznetsov as a template for our own definitions and results.

\subsection{Motivation}
Classical Kripke models do not readily lend themselves to the ultrafinite paradigm. 

This is mainly due to three intrinsic characteristics of the Kripkean framework:

\begin{enumerate}

\item \textbf{They lack a concept of resource-awareness}. A fundamental aspect of ultrafinitism is the emphasis on \emph{feasibility in computation}. The existence and verifiability of mathematical objects is subject to restrictions on the resources required to construct  and manipulate these objects. This consideration is not captured by the traditional Kripke models.
\item \textbf{The possible world structures in a Kripke frame can become infinite} (it is well known that the Finite Model Theorem fails in the kripke semantics for first order logic. Also, it is perhaps worth pointing out that in kripke models of HA, Heyting Arithmetics, already the ground zero world contains a copy of N). 
However, ultrafinitism takes a restrictive stance towards not just infinite totalities, but also extremely large finite ones. 
\item \textbf{They provide a global interpretation of a theory}. The third point where the ultra-finite perspective parts way not only with intuitionism, but pretty much the full gamut of traditional constructivism, is this: \emph{ a shift from global models to partial models}. Only a finite and complexity bounded set of formulas and terms is evaluated. That may sound strange, but a moment of thought will convince the Reader that assuming that for instance all primitive recursive functions are given in a single shot is tantamount to assuming that $N$ exists: terms are created by a limited resource entity. 
\end{enumerate}

These considerations motivate the development of Esenin-Volpin models,  Kripke-like structures  that incorporate feasibility considerations and restricts the possible world structures to finite ones. Furthermore, the forcing relation will be applicable only to a finite well-defined set of sentences.

As we shall see in a moment, the very rules of forcing will be altered to fit the new perspective.

\subsection{Esenin-Volpin Frames and the Forcing Relation}
First things first: let us introduce Esenin-Volpin Models and their notion of forcing.
But before we give the formal definition, let us provide the Reader with the proper intuition:

\begin{tcolorbox}
[colback=black!5!white,colframe=black!75!black,title= Heuristic of Esenin-Volpin Models]
Van Dalen points out in his excellent introduction to intuitionistic logic  \cite{Vandalen} that nodes are to be thought of as knowledge states of some ideal mathematician. Here they are the same, with one glaring difference: the mathematician is some finite resources individual/machine, and he/she/it can handle only finite bits of information, at a price. 
\\\\
Each steps in his/her/it deductions costs something, even going from the established knowledge of $A$ and $B$ to their conjunction. Moreover, unlike Brouwer's ideal math fellow, who  never dies and continues doing math till the end of times, our  has a limited lifespan: the models have a maximum finite depth.
\\\\
What happens beyond the maximal depth simply does not exists: perhaps a  contradiction is lurking behind the horizon, but it is not recorded. If one keeps in mind this picture, the new forcing conditions will look -we hope- quite natural.

\end{tcolorbox}

\begin{definition}[\textbf{Volpin Frames}]
Let \( R_1, \ldots, R_s \) be distinct predicate letters of \( L(Pd) \), \( f_1, \ldots, f_t \) be distinct function symbols of \( L(Pd) \), and \( c_1, \ldots, c_k \) be distinct constant symbols of \( L(Pd) \). Here, \( R_i \) is \( n_i \)-ary (\( 1 \leq i \leq s \)) and \( f_j \) is \( m_j \)-ary (\( 1 \leq j \leq t \)).

A Volpin frame \( \mathcal{V} \) over \( R_1, \ldots, R_s \) and \( f_1, \ldots, f_t \) is an \( (s + t + k + 3) \)-tuple
\[
\mathcal{V} = ((\mathcal{W}, \leq), D, \delta, \chi_1, \ldots, \chi_s, \lambda_1, \ldots, \lambda_t, \kappa_1, \ldots, \kappa_k)
\]
where 
\begin{itemize}
    \item \( (\mathcal{W}, \leq) \) is a finite rooted tree.
    \item \( D \) is a family of finite sets indexed by nodes of \( \mathcal{W} \).
    \item \( \delta \) assigns to each node \( w \) of \( \mathcal{W} \) a domain \( \delta(w) \subseteq D_w \).
    \item For each \( 1 \leq i \leq s \), \( \chi_i \) is an \( (n_i + 1) \)-ary function from \( \mathcal{W} \times D^{n_i} \) to \( \{0, 1\} \).
    \item For each \( 1 \leq j \leq t \), \( \lambda_j \) is an \( (m_j + 1) \)-ary function representing \( f_j \) such that if \( t \) is a term and is present at world \( w \), then \( \lambda_j(t) \) exists in every world \( w_1 \) with \( w_1 \geq w \).
    \item For each \( 1 \leq l \leq k \), \( \kappa_l(w) \) represents the interpretation of the constant symbol \( c_l \) in world \( w \).
\end{itemize}

We shall denote \(R(w) =\{ v | w\leq w  \} \), in other words, the set of all extensions of $w$
\end{definition}

A comment is in order: notice that worlds are not necessarily closed with respect to arbitrary terms. Each world must be closed only  with respect to terms which can be generated via the functions in the language in one steps from terms already existing in previous worlds domains. This is in keeping with the overarching principle that everything costs some effort, even term generation. 

\begin{definition}[\textbf{Esenin-Volpin Forcing}]
The forcing relation \( \models \) between worlds and formulas is defined as follows:
\begin{itemize}
    \item \( w \models \phi \land \psi \) if and only if there exist \( w_1, w_2 \) such that \( w_1, w_2 \prec w \) and \( w_1 \models \phi \) and \( w_2 \models \psi \).
    \item \( w \models \phi \lor \psi \) if and only if there exists a \( w_1 \) such that \( w_1 \prec w \) and either \( w_1 \models \phi \) or \( w_1 \models \psi \).
    \item For each \( w \), \( w \not\models \bot \).
    \item \( w \models \phi \to \psi \) if and only if for each \( w_1 \) such that \( w \preceq w_1 \), if there exists a \( w_2 \prec w_1 \) such that \( w_2 \models \phi \), then \( w_1 \models \psi \).
    \item \( w \models \exists x\, \psi(x) \) if and only if there exists a \( w_1 \) such that \( w_1 \prec w \) and an \( a \in D_{w1} \) such that \( w_1 \models \psi[a] \).
    \item \( w \models \forall x\, \psi(x) \) if and only if for every \( w_1 \) such that \( w \preceq w_1 \) and for every \( a \in D_{w1} \), \( w_1 \models \psi[a] \).
\end{itemize}

\end{definition}

Now we come to a crucial point, namely validity in a model. In Kripke models, to be true in a model  means to be forced at every node. But clearly, this demand is too restrictive in this new context: forcing rules "push up" validity, as sentences become more and more complicated. So, we need to rethink this notion a bit:

\begin{definition}[\textbf{Validity}]

Given a Esenin-Volpin model \( M \), a $k$ less or equal \(M\) maximal depth,  and a sentence \( \phi \), we say that \( M \) \textit{k-forces} \( \phi \) (denoted \( M \underset{k}{\Vdash} \phi \)) if for every world \( w \) in \( M \) that is at a depth of \( k \), \( w \) forces \( \phi \).Formally, 
\[ M \underset{k}{\Vdash} \phi \iff \forall w \in M \text{ with depth of } w = k, w \Vdash \phi \]
\end{definition}
Although Volpin Forcing is quite different from the  Kripke-Cohen one, it still enjoys monotonicity, as it should:

\begin{lemma}[\textbf{Monotonicity}]
In a Esenin-Volpin model, if a world $w$ forces a formula $\phi$, then any successor world $w'$ of $w$ (i.e., a world such that $w < w'$) also forces $\phi$.
\end{lemma}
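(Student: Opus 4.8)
My plan is to argue by induction on the structure of $\phi$. Fix worlds $w \preceq w'$ and assume $w \Vdash \phi$; in each case the task is to produce the data that the forcing clause for $\phi$ demands at $w'$. Throughout I read $\prec$ as the strict order underlying $\preceq$ (this is what the phrase ``there exist $w_1,w_2$'' in the $\wedge$-clause presupposes), so that $w_1 \prec w \preceq w'$ gives $w_1 \prec w'$.

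First I would clear the two base cases. If $\phi = \bot$ there is nothing to prove, since $w \not\Vdash \bot$ makes the hypothesis vacuous. If $\phi$ is atomic, say $R_i(a_1,\dots,a_{n_i})$, then persistence is exactly the assertion that $\chi_i(w,a_1,\dots,a_{n_i}) = 1$ implies $\chi_i(w',a_1,\dots,a_{n_i}) = 1$. This is not forced by the definition of Volpin frame as stated, so here I would add to the frame axioms the requirement that each $\chi_i$ be $\preceq$-monotone --- together with $D_w \subseteq D_{w'}$ for $w \preceq w'$ and stability of the $\kappa_l$, which the clause governing the $\lambda_j$ already indicates is the intended behaviour --- or else record it as a standing hypothesis. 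I expect this to be the only point at which the lemma carries real content.

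The inductive steps then reduce to bookkeeping on the order, and in fact do not use the induction hypothesis at all. The clauses for $\wedge$, $\vee$ and $\exists$ only refer to worlds strictly below the node: if $w \Vdash \phi_1 \wedge \phi_2$ is witnessed by $w_1, w_2 \prec w$, then $w_1, w_2 \prec w'$ as noted, and the very same $w_1, w_2$ (and, for $\exists$, the same parameter $a \in D_{w_1}$) witness $w' \Vdash \phi$; the $\vee$ and $\exists$ cases are identical. Dually, the clauses for $\to$ and $\forall$ range over the up-set of the node, and replacing $w$ by $w' \succeq w$ only shrinks that up-set: any $v$ with $w' \preceq v$ also satisfies $w \preceq v$, so the implication ``some $w_2 \prec v$ forces $\phi_1$ $\Rightarrow$ $v \Vdash \phi_2$'' (resp.\ ``$v \Vdash \psi[a]$ for all $a \in D_v$'') that $w \Vdash \phi$ guarantees at $v$ is exactly what $w' \Vdash \phi$ requires at $v$. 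Collecting the six cases finishes the proof. The only obstacle worth flagging is therefore not in the induction but upstream of it: the frame definition must be tightened so that atomic persistence genuinely holds, after which the shape of the forcing clauses --- each connective being evaluated one level down, or from the node upward --- makes monotonicity fall out automatically.
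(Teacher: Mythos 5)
Your proof is correct and follows essentially the same route as the paper's: structural induction, with the $\wedge$/$\vee$/$\exists$ cases handled by reusing the same witnesses $w_1 \prec w \preceq w'$ and the $\to$/$\forall$ cases by the shrinking up-set. Your two refinements are both sound and in fact sharpen the paper's version: the inductive hypothesis is indeed never invoked given the shape of the forcing clauses, and you are right that atomic persistence is not actually guaranteed by the frame definition as stated (the paper merely gestures at ``the properties of $\chi_i$''), so monotonicity of each $\chi_i$ along $\preceq$, together with $D_w \subseteq D_{w'}$, must be added as an explicit frame axiom for the base case to go through.
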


\begin{proof}

Let's prove the proposition by induction on the structure of \( \phi \).

\begin{enumerate}
    \item For atomic formulas: This is given by the properties of \( \chi_i \) for \( R_i \) and the definition of function symbols, constants and terms.
    
    \item \(\phi = \phi_1 \land \phi_2\): If \( w \models \phi_1 \land \phi_2 \), then there exist worlds \( w_1, w_2 \) such that \( w_1, w_2 \prec w \), \( w_1 \models \phi_1 \), and \( w_2 \models \phi_2 \). But then $w_1, w_2 < w'$. Hence, \( w' \models \phi_1 \land \phi_2 \).
    
    \item \(\phi = \phi_1 \lor \phi_2\): If \( w \models \phi_1 \lor \phi_2 \), then there exists a world \( w_1 \) such that \( w_1 \prec w \) and either \( w_1 \models \phi_1 \) or \( w_1 \models \phi_2 \). Again, $w_1 < w'$. Hence, \( w' \models \phi_1 \lor \phi_2 \).

    \item \(\phi = \psi_1 \to \psi_2\): Assume \( w \models \psi_1 \to \psi_2 \) and there exists a \( w_2 \prec w' \) such that \( w_2 \models \psi_1 \). Then by the definition of the implication in our Esenin-Volpin model and the inductive hypothesis, \( w' \models \psi_2 \). Thus, \( w' \models \psi_1 \to \psi_2 \).
    
    \item \(\phi = \exists x\, \psi(x)\): If \( w \models \exists x\, \psi(x) \), then there exists a world \( w_1 \) and an element \( a \) such that \( w_1 \prec w \) and \( w_1 \models \psi[a] \). By the inductive hypothesis, \( w' \models \psi[a] \). Hence, \( w' \models \exists x\, \psi(x) \).
    
    \item \(\phi = \forall x\, \psi(x)\): If \( w \models \forall x\, \psi(x) \), then for every world \( w_1 \) such that \( w \preceq w_1 \) and for every \( a \in D_{w1} \), \( w_1 \models \psi[a] \). By the inductive hypothesis, for every \( a \in D_{w'1} \), \( w' \models \psi[a] \). Hence, \( w' \models \forall x\, \psi(x) \).
\end{enumerate}

\end{proof}

\section{Completeness theorem for k-consistent theories}

In this section, we introduce the concept of k-consistency and present the soundness and completeness theorems of Esenin-Volpin models of suitable depth for k-consistent theories.

\subsection{k-consistent theories}

\begin{definition}[\textbf{k-consistency}]

A theory $T$ is said to be k-consistent if and only if there is no contradiction in $T$ that can be proved with a cut-free  proof tree of depth less or equal $k$.
\end{definition}

Admittedly, the notion of complexity highlighted above is quite coarse-grained, and would be inadequate in modeling actual computations of  real machines. It is simply an approximation which, it is hoped, will provide enough clarity on the direction we are taking.

\subsection{ Soundness and Completeness}

\begin{lemma}[\textbf{Soundness} ]
Let  $\phi$ be a formula  which has a cut-free  proof of complexity less than $k$. Then for every  Esenin-Volpin model $M$ of depth at least $k$, we have $M \Vdash_k \phi$.
\end{lemma}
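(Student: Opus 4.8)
The plan is to replace the bare claim about a closed theorem by a stronger statement about sequents and induct on the cut-free LJ-derivation; since the left rules manipulate the antecedent, a statement with empty antecedent cannot carry the induction. Concretely I would prove: if $\gamma_1,\dots,\gamma_n \vdash \phi$ has a cut-free LJ-proof of depth $\le d$, then in every Esenin-Volpin model and for every world $w$ of depth $\ge d$ at which each $\gamma_i$ is forced at some strict predecessor of $w$, one has $w \Vdash \phi$ (for $n=0$ the side condition is vacuous, and the claim reads ``every world of depth $\ge d$ forces $\phi$''). Instantiating with empty antecedent and $d \le k-1$ gives $w \Vdash \phi$ for every world of depth $\ge k-1$, hence for every world of depth exactly $k$ — such worlds exist because the model has depth $\ge k$ — which is $M \Vdash_k \phi$. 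The argument uses throughout the Monotonicity Lemma and the fact that the predecessors of any node form a chain.

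Most cases are routine. The logical axiom $\Gamma,\phi\vdash\phi$ has depth $1$, and any $w$ meeting the side condition has a strict predecessor forcing $\phi$, so $w\Vdash\phi$ by monotonicity; the ex falso axiom $\Gamma,\bot\vdash\phi$ is dispatched vacuously, no world forcing $\bot$ — precisely the intended behaviour, an unfeasible contradiction being inert. Each right-introduction and each structural rule raises the proof depth by one, so from $w$ of depth $\ge d$ one passes to its parent $w^-$ of depth $\ge d-1$, applies the induction hypothesis there, and reads off the relevant forcing clause, using $w^-\prec w$ for $\land,\lor,\exists$ and $w^-\preceq w_1$ for arbitrary $w_1\ge w$ (with monotonicity) for $\to,\forall$. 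The rules $\land$-left, $\lor$-left and $\exists$-left fit the same template: the principal antecedent formula, forced at a strict predecessor $v\prec w$, decomposes through its own forcing clause to something forced at $v'\prec v\prec w$, hence at a strict predecessor of $w$, which feeds the premise's induction hypothesis at target $w$. One caveat recurs in $\exists$-right and $\forall$-left: the instantiating term must be present at the world where the quantifier is unpacked, so here the partiality of Volpin frames — terms generated one step at a time, at a cost, cf. the comment after the definition of Volpin frames — forces a descent to a world deep enough to have produced that term; the eigenvariable conditions for $\exists$-left and $\forall$-right interact similarly with the per-node domains $\delta(w)\subseteq D_w$.

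The genuine obstacle is the implication-left rule, the sequent form of modus ponens, where the deliberately non-standard clause for $\to$ — asking that the antecedent be forced at a \emph{strict} predecessor, not at the world itself — bites. Given $\Gamma,A\to B\vdash A$ and $\Gamma,B\vdash C$ and a world $w$ below which $\Gamma$ and $A\to B$ are forced, to read $C$ off the second premise one needs $B$ forced at a strict predecessor of $w$; to detach such a $B$ from $A\to B$ one needs $A$ forced strictly between the world forcing $A\to B$ and the world where $B$ is read off; and to get that $A$ from the first premise one must invoke the induction hypothesis at a still lower world. Thus the two subderivations end up laid out \emph{in series} down a single branch of the model, consuming levels governed by their combined bulk, whereas the proof-\emph{depth} measure records only the maximum of their depths — and the side condition ``$A\to B$ is forced at some strict predecessor of $w$'' says nothing about \emph{how far} below $w$ it is. I expect essentially all the work to be here, and would try, in order: (i) use the subformula property of cut-free LJ to keep the auxiliary derivations — and hence the descent — short; (ii) strengthen the induction hypothesis to demand each antecedent formula be forced with a prescribed depth-gap below the world, and verify this survives $\to$-right and the left rules that push $A\to B$ into a context; or (iii) replace the coarse depth measure of the previous subsection by a finer one, e.g. a cost accumulated along the branches that the modus-ponens pattern strings together, under which the lemma goes through. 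The quantifier-left rule poses a milder form of the same difficulty, layered on the term-availability caveat above.
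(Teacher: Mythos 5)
Your strengthening to a sequent-level induction hypothesis and induction on the cut-free LJ derivation is a genuinely different route from the paper's. The paper's proof inducts on the main connective of $\phi$ and treats the last inference as the corresponding right (introduction) rule; antecedents and left rules never appear, so the entire difficulty you isolate is invisible there. Indeed the paper's implication case reads ``if $\psi$ is forced by any previous world, then by inductive hypothesis there is a proof of $\psi$ shorter than $k_1$'' --- that is the \emph{converse} of the soundness induction hypothesis, a completeness step smuggled into a soundness argument --- and its universal case produces proofs ``of size max $k+1$'', exceeding the bound being established. Your insistence on carrying the antecedent through the induction is the correct repair of exactly these defects, not an over-complication.

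That said, your proposal is not yet a proof: the $\to$-left case is left open, and the obstacle you describe is real, not an artifact of your setup. Because the clause for $\to$ demands the antecedent at a \emph{strict} predecessor, detaching $B$ from $A \to B$ and then feeding $B$ to the second premise lays the two subderivations out in series along a single branch, so the world where $C$ is read off must sit at a depth governed by roughly the \emph{sum} of the subderivations' depths, plus enough room below the lower world to re-establish the context there --- room which your side condition, saying only ``forced at some strict predecessor,'' does not guarantee. Since the lemma's measure is a maximum over premises, the induction as stated cannot close, and your option (ii) is unlikely to survive nested implications for the same reason. The viable repair is your option (iii): replace tree depth by a cost that is additive along the branch (essentially proof size), under which the serial layout is exactly paid for; but that amends the \emph{statement} of the lemma, not merely your argument, and you should say so explicitly rather than present the three options as interchangeable. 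The quantifier cases then need only the bookkeeping you already flag for one-step-at-a-time term generation.
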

\begin{proof}
We will use induction on the complexity of the proof of $\phi$.

\textbf{Base case}: 
\begin{itemize}
    \item \textbf{Atomic Formulas}: Just falling out the definition for forcing of basic formulas.
\end{itemize}

\textbf{Inductive step}:

\begin{itemize}
    \item \textbf{Conjunction}: Suppose \( \phi \) is of the form \( \psi \land \chi \). If there's a proof of \( \phi \) of length \( k \), there are shorter proofs of its constituents. But then by inductive hypothesis,  there exist worlds \( w_1 \) and \( w_2 \) with depth less than \( k \) that force \( \psi \) and \( \chi \) respectively. Thus, world \( w \) at depth \( k \) forces \( \psi \land \chi \).

    \item \textbf{Disjunction}: If \( \phi \) is \( \psi \lor \chi \), and it has a proof of length \( k \), then either \( \psi \) or \( \chi \) has a proof  one step shorter. Thus, by induction, one of them is forced at a world with depth strictly less than \( k \), which means \( \phi \) is forced at depth \( k \).

        \item \textbf{Implication}: Assume  that \( w \) is not a leaf node (else by the forcing condition of implication is trivially true).  Consider any world \( w_1 \) greater than \( w \), say of depth  of depth \( k_1 \), if \( \psi \) is forced by any previous world, then  by inductive hypothesis there is a proof of \( \psi \)shorter than \(k_1 \). Now, join the implication and this proof to obtain a proof shorter or equal to the depth of  \( w_1 \) of \( \chi \).  But then \(w\)
  forces the implication by the forcing definition.

    \item \textbf{Existential Quantifier}: If \( \phi \) is \( \exists x \, \psi(x) \), and it has a proof of length \( k \), then there's a prior proof of \(\psi)(t)\) for some term \(t\) (it could be a constant).  Now, per inductive hypothesis, there is a  prior world \( w_1 \) and an element \( a \) in its domain such that \( w_1 \) forces \( \psi(a) \). So, by the forcing rule for the existential, \(w\) forces  \( \phi \)

    \item \textbf{Universal Quantifier}: For \( \phi \) as \( \forall x \, \psi(x) \), for every subsequent world \( w_1 \) and for all elements in its domain, there is a proof of  \( \psi(a) \) of seize max $k+1$, so by induction \(w_1\) forces  \( \psi(a) \). Which implies, via the forcing rules, our result.
\end{itemize}

\end{proof}

In order to prove completeness for Esenin-Volpin models, we follow an approach akin  to the completeness theorem of Kripke's semantics for first order intuitionistic  logic (in fact our proof will be in a sense the "miniaturization" of the standard one): we shall  construct a canonical model $M_0$, which will be our universal counterexample provider. $M_0$ will be a syntactic model, assembled out of theories which will be consistent up to some depth.

Let us start: assume \(0 \leq m \leq k\). 
Given a large finite set \( S_0 \) of new constants,  and the  language's signature  \( L_{\Omega} \), where we indicate as \(S_{\Omega}\) its constants, 
We extend \( L_{\Omega} \) with constants from \( S \) (a finite subset of \( S_0 \)) and denote this extended language of closed formulas of structural  complexity at most $m$ as \( \text{m-CF}(L_{\Omega + S}) \). 

\begin{definition}

\textbf{Bi-theories of depth m} are  triples of the form \( (S, \Gamma, \Delta) \), where \( \Gamma, \Delta \subseteq \text{m-CF}(L_{\Omega + S}) \), and $S_{\Omega} \subseteq S \subset S_0$.  A bi-theory is\textbf{ replete} iff:

\begin{description}
    \item[m-Consistent:] A bi-theory \( (S, \Gamma, \Delta) \) is called \( m \)-consistent if there do not exist finite sets \( \Gamma_0 \subset \Gamma \) and \( \Delta_0 \subset \Delta \) such that there is a cut-free proof of \(\land \Gamma_0 \rightarrow \lor \Delta_0 \) in \( m \) steps or fewer.
    
    \item[m-Complete:] \( (S, \Gamma, \Delta) \) is \( m \)-complete if, for every closed formula \( \phi \in \text{m-CFmL} + S \), either  \( \phi \in \Gamma \) or \( \phi \in \Delta \) .
    
    \item[m-Existential Closed:]
    if  \( \phi = \exists x \psi(x) \in \Gamma)\), there is already a term $t$ of complexity less than $m$ such that \(\phi(t) \in \Gamma\) and $t \in S$.
    
\end{description}

The\textbf{ canonical model} \( M_0 \) of depth $k$is given by the structure \( \langle W_0, R_0, D_0, \alpha_0 \rangle \), where:

\begin{itemize}
    \item \( W_0 \) is the set of all small \( m \)-replete bi-theories for all $ 0\leq m \leq k$; (notice that this gives us automatically a depth for each of them) 
    \item For bi-theories \( (S_1, \Gamma_1, \Delta_1) \) and \( (S_2, \Gamma_2, \Delta_2) \), the relation \( R_0 \) holds between them iff \( S_1 \subseteq S_2 \), $S_2$ contains all terms generated in one steps from elements of $S_1$, and \( \Gamma_1 \subseteq \Gamma_2 \);
    \item For each world \( w = (S,\Gamma,\Delta) \), the domain is given by \( D(w) = S \);
    \item For each predicate symbol \( P \) and elements \( a_1, \ldots, a_{v(P)} \in S \), the interpretation function \( \alpha_0(w)(P)(a_1, \ldots, a_{v(P)}) = 1 \) if and only if \( P(a_1, \ldots, a_{v(P)}) \in \Gamma \).
\end{itemize}
\end{definition}

In this structure, the worlds are replete  bi-theories, the accessibility relation \( R_0 \) reflects the extension of constants and formulas as well as the extension of computational power, \( D_0 \) defines the domain for each world, and \( \alpha_0 \) provides an interpretation of predicates at each world.The roles of \(\Gamma \) and 
\(\Delta\) are respectively to be the theory (ie the forced formulae) and the non-theory (ie the formulae which are not forced) at the given world. 

Just like in the proof of completeness for Kripke models, we need two lemmas: The first lemma tells us that we can broaden an initial seed  bi-theory to a maximal $m$-consistent one, where $m$ is the depth of the world. 

\begin{lemma}[\textbf{Saturation Lemma}]
Let \( (S, \Gamma, \Delta) \) be a small \( m \)-consistent bi-theory. Then, there exists a small \( m \)-consistent, \( m \)-complete, \( m \)-\(\exists\)-complete bi-theory \( (S_0, \Gamma_0, \Delta_0) \) such that \( S \subseteq S_0 \), \( \Gamma \subseteq \Gamma_0 \), and \( \Delta \subseteq \Delta_0 \).
\end{lemma}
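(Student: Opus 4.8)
The plan is to mimic the classical Henkin/Lindenbaum-style saturation argument, but carried out within the finite, resource-bounded setting so that everything stays ``small'' and the complexity bound $m$ is never exceeded. First I would fix an enumeration $\phi_0, \phi_1, \ldots, \phi_N$ of all formulas in $\text{m-CF}(L_{\Omega+S_0})$ — this list is finite because $S_0$ is a fixed large finite set of constants and we only look at closed formulas of structural complexity at most $m$. I would then build a finite increasing chain of $m$-consistent bi-theories $(S,\Gamma,\Delta) = (S^{(0)},\Gamma^{(0)},\Delta^{(0)}) \subseteq (S^{(1)},\Gamma^{(1)},\Delta^{(1)}) \subseteq \cdots \subseteq (S^{(N+1)},\Gamma^{(N+1)},\Delta^{(N+1)})$ by deciding one formula $\phi_n$ at each stage: if adjoining $\phi_n$ to $\Gamma^{(n)}$ keeps the bi-theory $m$-consistent, put $\phi_n$ into $\Gamma^{(n+1)}$; otherwise put it into $\Delta^{(n+1)}$. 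The key sub-claim, exactly as in the Kripke case, is that at least one of the two choices preserves $m$-consistency: if both $\Gamma^{(n)} \cup \{\phi_n\}$ and $\Delta^{(n)} \cup \{\phi_n\}$ were $m$-inconsistent, one would splice the two short cut-free derivations together (a cut on $\phi_n$, followed by normalization) to get a short cut-free proof of a contradiction from $(S^{(n)},\Gamma^{(n)},\Delta^{(n)})$ itself — contradicting its $m$-consistency. Here the depth bookkeeping matters: combining two proofs of depth $\le m$ via a cut and re-normalizing must still land at depth $\le m$, so I would either absorb a fixed additive constant into the definition of ``$m$-steps'' or, more cleanly, note that because $\phi_n$ has complexity $\le m$ the relevant derivations are all of the bounded shape the definition already allows.

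Second, I would interleave the $m$-$\exists$-completeness requirement into the same chain. Whenever a formula $\exists x\,\psi(x)$ gets placed into $\Gamma$, I would pick a fresh witness constant $c$ from $S_0 \setminus S^{(n)}$ (possible because $S_0$ is chosen large enough relative to $N$, i.e.\ we only ever need finitely many — at most $N$ — witnesses) and add $c$ to the constant set and $\psi(c)$ to $\Gamma^{(n+1)}$. The standard argument shows this preserves $m$-consistency: a short cut-free refutation using $\psi(c)$ could be transformed, by replacing $c$ with a bound variable and appealing to the $\exists$-left rule, into a short cut-free refutation already available from $\exists x\,\psi(x) \in \Gamma$. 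One has to be slightly careful that $\psi(c)$ still has complexity $\le m$ — it does, since $\psi(x)$ is a subformula of something of complexity $\le m$ and substitution of a constant does not raise structural complexity. Because only finitely many existentials can ever enter $\Gamma$, the process terminates with $S_0 \subseteq$ some fixed finite $S^{(N+1)} \subseteq S_0$, keeping the bi-theory small.

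Finally I would collect $\Gamma_0 = \Gamma^{(N+1)}$, $\Delta_0 = \Delta^{(N+1)}$, $S_0' = S^{(N+1)}$ (renaming to avoid clashing with the ambient $S_0$) and verify the three properties. $m$-consistency is preserved at every step by the sub-claim above, hence holds at the end. $m$-completeness is immediate: every $\phi$ in the enumeration was explicitly routed into $\Gamma_0$ or $\Delta_0$. $m$-$\exists$-completeness holds because each existential that landed in $\Gamma_0$ had its witness added at the stage it was processed, and later stages only grow the sets. I expect the main obstacle to be precisely the depth/complexity accounting in the splicing step — making sure that ``$m$-consistent'' is robust under the cut-and-normalize operation used to prove the dichotomy, and that none of the added formulas ($\phi_n$ itself, the Henkin witnesses $\psi(c)$) push structural complexity past $m$. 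This is the point where the ``miniaturized'' proof genuinely diverges from the classical one, and it may force a small adjustment to the bound (e.g.\ working with $m' = m + O(1)$ at intermediate stages, or building the chain only for $\phi_n$ of complexity $< m$ and handling the borderline case separately); I would flag this explicitly rather than sweep it under the rug.
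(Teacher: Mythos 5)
Your proposal follows essentially the same route as the paper's own proof: a finite Lindenbaum-style enumeration of the closed formulas of complexity at most $m$, routing each formula into $\Gamma$ if $m$-consistency survives and into $\Delta$ otherwise, with Henkin witnesses drawn from the finite reserve $S_0$ interleaved to secure existential closure. The one place you go beyond the paper is in flagging the depth bookkeeping for the dichotomy sub-claim (splicing two depth-$\le m$ cut-free refutations via a cut and re-normalizing need not land back at depth $\le m$); this is a genuine issue which the paper's own one-paragraph proof does not address at all, so your explicit caveat about possibly needing to work with $m' = m + O(1)$ at intermediate stages is warranted rather than excessive.
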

\begin{proof}

 Start with \( (S, \Gamma, \Delta) \). Now, enumerate all formulas in \( \text{m-CF}(L_{\Omega + S}) \) and go through the list.
If  \(\phi_i\), is already in either \( \Gamma \) or \(\Delta\), leave it there. Else, try to add it to \( \Gamma\). If the new augmented \( \Gamma \) is $m$- consistent, it replaces the old one. Else, throw \( \phi_1\) in the \(\Delta \) bucket. 

As for existential closure, just ensure that if in \( \Gamma\) there is a formula \(\exists x \psi(x) \), a suitable witness $t$ is already in $S$. If it isn't, grab a fresh constant from the pool of constants and augment your $S$ as needed and add to \(\Gamma \) \(\psi(t) \).

\end{proof}
 Before going into the next Lemma, we point out this fact, which descends immediately from the previous lemma: If \( (S, \Gamma, \Delta) \)  is $m$-replete, then if \( \Gamma \Vdash_m \phi\), \(\phi \in \Gamma\): in other words,\textit{ worlds of the canonical model are $m$-deductively closed}. 
 
\begin{lemma}[\textbf{Main Semantic Lemma}]
In the canonical model \( M_0 \), for any closed formula \( \phi \), \( (S, \Gamma, \Delta) \models \phi \) if and only if \( \phi \in \Gamma \).
\end{lemma}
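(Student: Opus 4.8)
The plan is to prove the Main Semantic Lemma by induction on the structural complexity of $\phi$, following the template of the classical truth lemma for Kripke models, but with two crucial adaptations forced by our setting: the forcing clauses for connectives look one step \emph{down} the tree rather than up, and the relevant consistency/closure hypotheses are only available \emph{up to the depth} $m$ of the world in question. So throughout I would carry along the standing assumption that $(S,\Gamma,\Delta)$ is an $m$-replete bi-theory (with $m$ its depth in $M_0$), and I would freely invoke the remark stated just before the lemma, namely that such worlds are $m$-deductively closed: if $\Gamma \Vdash_m \psi$ with $\psi$ of complexity $\leq m$, then $\psi \in \Gamma$. The base case ($\phi$ atomic) is immediate from the definition of $\alpha_0$, since $\alpha_0(w)(P)(\bar a)=1 \iff P(\bar a)\in\Gamma$, which is exactly $w \models P(\bar a)$.

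For the inductive step I would treat the connectives in turn. For $\phi = \psi_1 \wedge \psi_2$: the forcing clause asks for predecessors $w_1, w_2 \prec w$ with $w_i \models \psi_i$; going right-to-left, if $\psi_1 \wedge \psi_2 \in \Gamma$ then by $m$-deductive closure (one step of $\wedge$-elimination) $\psi_1, \psi_2 \in \Gamma$, and by the Saturation Lemma we can find $m{-}1$-replete predecessor worlds containing $\psi_1$, resp. $\psi_2$, in their $\Gamma$-components; the induction hypothesis at those worlds then gives forcing. Left-to-right is the dual: forcing at predecessors plus IH puts $\psi_1,\psi_2$ into the respective $\Gamma$'s, hence (monotonicity along $R_0$ and $m$-deductive closure) $\psi_1\wedge\psi_2 \in \Gamma$. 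Disjunction and the existential quantifier are handled the same way, the existential case using precisely the $m$-$\exists$-completeness clause to supply the witnessing constant $t\in S$. For $\phi = \psi_1 \to \psi_2$: suppose $\psi_1\to\psi_2 \in \Gamma$ and let $w \preceq w_1$ with some $w_2 \prec w_1$, $w_2 \models \psi_1$; by IH $\psi_1$ sits in $w_2$'s theory, hence in $w_1$'s $\Gamma$ by $R_0$-monotonicity, and with $\psi_1\to\psi_2$ (which propagates up by $R_0$) one step of modus ponens gives $\psi_2 \in \Gamma_{w_1}$, so IH yields $w_1 \models \psi_2$. Conversely, if $\psi_1\to\psi_2 \notin \Gamma$, then by $m$-completeness it lies in $\Delta$, and the standard Lindenbaum-style argument — $m$-consistency of $(S,\Gamma\cup\{\psi_1\},\{\psi_2\})$, then Saturation — produces an $R_0$-successor $w_1$ forcing $\psi_1$ at a predecessor but not forcing $\psi_2$, refuting the forcing clause. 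The universal quantifier is analogous, quantifying over $R_0$-successors and their expanded constant sets.

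The main obstacle I anticipate is bookkeeping the depth budget: each forcing clause for $\wedge,\vee,\exists$ descends one level in the tree, so to force a formula of complexity $c$ at a node of depth $m$ one genuinely needs the subtree below it to reach depth $m-c$ or so, and each application of the Saturation Lemma to build a predecessor must produce an $(m{-}1)$-replete bi-theory whose $m{-}1$-consistency is inherited from the $m$-consistency of the parent — this requires checking that a cut-free proof of $\bigwedge\Gamma_0 \to \bigvee\Delta_0$ of depth $\leq m-1$ in the child lifts to one of depth $\leq m$ in the parent, which is where the cut-free, normalized LJ hypothesis does its work. Dually, in the $\to$ and $\forall$ cases one must verify that the Lindenbaum extension does not overflow the complexity bound $\text{m-CF}(L_{\Omega+S})$, i.e. that all formulas produced still have complexity $\leq m$; this is automatic since $\psi_1,\psi_2$ are subformulas of $\psi_1\to\psi_2$.

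Finally, once the biconditional $w \models \phi \iff \phi \in \Gamma$ is established for all closed $\phi$ of appropriate complexity at all worlds, completeness for $k$-consistent theories follows in the usual way: given a $k$-consistent $T$ and a formula $\phi$ with $T \not\vdash_k \phi$ (no cut-free proof of depth $\leq k$), the bi-theory $(S_\Omega, T, \{\phi\})$ is $k$-consistent, Saturation extends it to a $k$-replete root world $w_0$ with $\phi \notin \Gamma_{w_0}$, the Semantic Lemma gives $w_0 \not\models \phi$ while $w_0 \models \psi$ for all $\psi \in T$, and reading off depth-$k$ nodes above $w_0$ exhibits a model of $T$ that does not $k$-force $\phi$. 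I would flag that care is needed about whether the counterexample should be read at the root or at depth $k$; the cleanest statement packages $M_0$ restricted to the cone $R_0(w_0)$ and notes that its leaves, being at depth $\leq k$, inherit forcing of $T$ by Monotonicity.
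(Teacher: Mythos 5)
Your proposal follows essentially the same route as the paper's own proof: structural induction on $\phi$, using $m$-deductive closure of replete bi-theories, the Saturation Lemma to manufacture the required predecessor and successor worlds, $m$-existential closure for the $\exists$ case, and the Lindenbaum-style consistency argument (saturating $(S,\Gamma\cup\{\psi_1\},\{\psi_2\})$, resp.\ adding a fresh constant for $\forall$) for the failure directions of $\to$ and $\forall$. Your explicit bookkeeping of the depth budget and of how consistency passes between parent and child worlds makes precise a point the paper leaves implicit, but it does not change the architecture of the argument.
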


\begin{proof}
We prove this lemma by induction on the structure of \( \phi \). 

\textbf{Base case:} If \( \phi \) is atomic, the result is directly given by the definition of the canonical model.

\textbf{Inductive step:} 

1. \( \phi = \psi_1 \land \psi_2 \):

 Let us assume that \( (S, \Gamma, \Delta) \models \psi_1 \land \psi_2 \). Then by forcing rules, there are two lower worlds \(\Gamma_1\) and  \(\Gamma_2\) such that 
\( (S_1, \Gamma_1, \Delta_1 ) \models \psi_1 \) and \( (S_2, \Gamma_2, \Delta_2 ) \models \psi_2 \). But then by inductive hypothesis \( \psi_1 \in \Gamma_1\) and \( psi_2 \in \Gamma_2\), and thus \( \psi_1, \psi_2 \in \Gamma\). By deductive closure (we in fact need one step deduction), we can infer that \( \psi_1 \land \psi_2 \in \Gamma \). Now, let us start with \( \psi_1 \land \psi_2 \in \Gamma \). We can start from \(\psi_1\)  and use saturation to produce a world \( (S_1 , \Gamma_1, \Delta_1) \models \psi_1 \), lower that the first world. Similarly with \( \psi_2 \). But then, by the rule of Volpin forcing, \( (S, \Gamma, \Delta) \models \psi_1 \land \psi_2 \).

2. \( \phi = \psi_1 \lor \psi_2 \): 

Essentially the same  argument (both ways)  works as in the case above. 

3. $\phi = \phi_1 \rightarrow \phi_2$: 

If $\phi = \phi_1 \rightarrow \phi_2 \in \Gamma$, then consider  any world $w_1 =(S_1`, \Gamma_1, \Delta_1)$ extending $w= (S`, \Gamma, \Delta)$, and assume that for some world $w_2\leq w_1$, $ w_2 = (S_2, \Gamma_2, \Delta_2 ) \models \phi_1 $. Then by inductive hypothesis,    $\phi_1 \in \Gamma_2$. Then, since also $\phi_1 \rightarrow \phi_2 \in \Gamma_1$ by monotonicity, $w_2 \models \phi_2$ by deductive closure. But this implies by the Volpin forcing that $w \models \phi_1 \rightarrow \phi_2$.

On the other hand, if $\phi = \phi_1 \rightarrow \phi_2$ is not in $\Gamma$, then the bi-theory $(S, \Gamma \cup \{\phi_1\}, \{\phi_2\})$ is $m+1$- consistent.
By saturating it, we obtain a canonical model world $w_1 =(S_1, \Gamma_1, \Delta_1)$ extending $w$, such that $(w_1 \Vdash \phi_1$ and $w_1 \nVdash \phi_2$. Again via saturation starting from  $\emptyset, \phi_1, \emptyset$, we can obtain a world $w_2 \leq w_1$ such that $w_2 \models \phi_1$. But then,   $S, \Gamma, \Delta) \nVdash \phi_1 \rightarrow \phi_2$.

4.  \( \phi = \exists x \psi(x) \): 

The $\exists$ case follows from existential  closure of $(S, \Gamma, \Delta)$: $w= (S, \Gamma, \Delta) \Vdash \exists x \psi(x)$ 
if and only if $w_1 =(S_1, \Gamma_1, \Delta_1) \Vdash \psi(a)$ for some $a \in S_1$. But then by inductive hypothesis \( \psi(a) \in \Gamma_1\) and thus also in \(Gamma\). Now, invoke m-deductive closure to conclude that \( \exists x \psi(x) \in \Gamma)\).

Other way around: assume \( \exists x \psi(x)\ \in \Gamma)\) . Now use existential closure to get \(t\). To conclude, via $m-1$ saturation of $(S_{Omega} \cup \{a\},  \{\psi(a), \emptyset\})$ carve out a smaller world $w_1$ which witness a concrete instance of the existential quantifier. Now, the forcing rule completes the proof.

5 \( \phi = \forall x \psi(x) \): 

If $\phi = \forall x \psi(x)$ is in $\Gamma$, then it is in $\Gamma_1$ for any $(S_1, \Gamma_1, \Delta_1) \in R((S, \Gamma, \Delta))$. Take an arbitrary $a \in S_1$. By deductive closure, $\psi(a) \in \Gamma_1$ (it requires only one step deduction), and thus by induction hypothesis $(S_1, \Gamma_1, \Delta_1) \Vdash \psi(a)$. Therefore, by definition of Volpin forcing, $(S, \Gamma, \Delta) \Vdash \forall x \psi(x)$.

Now let $\forall x \psi(x)$ be in $\Delta$. Let $a$ be a new constant from $S_0 - S$ (non-empty, since our bi-theory is small).  The bi-theory $(S \cup \{a\}, \Gamma, \{\psi(a)\})$ is $m +1$-consistent. Saturate it. We obtain a world $(S_1, \Gamma_1, \Delta_1) \in R((S, \Gamma, \Delta))$ that falsifies $\psi(a)$. Therefore, $\forall x \psi(x)$ is false in $(S, \Gamma, \Delta)$.

\end{proof}

\begin{theorem}[\textbf{Completeness}]
If \( \phi \) is universally true in all Esenin-Volpin models up to depth \( k \), then \( \phi \) is derivable within \( k \) steps.
\end{theorem}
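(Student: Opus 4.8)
The plan is to prove the theorem by contraposition, using the canonical model $M_0$ built above as the universal counterexample generator — the argument is the finite, complexity‑bounded ``miniaturization'' of the classical Kripke completeness proof for $\mathrm{IPC}$. So suppose $\phi$ — which, for the statement to be meaningful, I take to be a closed formula of structural complexity at most $k$ — has no cut‑free derivation of depth $\le k$. I will produce an Esenin‑Volpin model of depth $k$ together with a world of depth $k$ in it that does not force $\phi$; by the definition of $k$‑forcing this is exactly the assertion that $\phi$ is not valid up to depth $k$, i.e. the contrapositive of the claim. Combined with the Soundness Lemma already proved, this pins down ``derivable in $\le k$ steps'' as precisely ``$k$‑valid''.

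The steps, in order. First, form the seed bi‑theory $(S_\Omega,\emptyset,\{\phi\})$ and check it is $k$‑consistent: a cut‑free proof of $\bigwedge\emptyset\to\bigvee\{\phi\}$ of depth $\le k$ is, up to trivial rewriting, a cut‑free proof of $\phi$ of depth $\le k$, against the hypothesis; it is small provided the reservoir $S_0$ of fresh constants is chosen large enough that only boundedly many new witnesses (a bound depending on $k$ and the signature) are ever required at depths $\le k$. Next, apply the Saturation Lemma to extend it to a $k$‑replete bi‑theory $w^{\ast}=(S^{\ast},\Gamma^{\ast},\Delta^{\ast})$ with $\phi\in\Delta^{\ast}$; since $w^{\ast}$ is $k$‑consistent and $k$‑complete, $\phi\notin\Gamma^{\ast}$ (otherwise $\phi$ would lie in both $\Gamma^{\ast}$ and $\Delta^{\ast}$, yielding the one‑line proof of $\phi\to\phi$ and contradicting $k$‑consistency). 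Finally, read off the counterexample: $w^{\ast}$ is by construction a node of $M_0$ sitting at depth $k$, and $M_0$ is an Esenin‑Volpin model of depth $k$; by the Main Semantic Lemma, $w^{\ast}\models\phi$ iff $\phi\in\Gamma^{\ast}$, so $w^{\ast}\nVdash\phi$. Since $w^{\ast}$ has depth $k$, we conclude $M_0\nVdash_k\phi$, so $\phi$ fails in an Esenin‑Volpin model of depth $k$ and the contrapositive is complete.

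I expect the real difficulty to be bookkeeping about depth rather than anything conceptual. One must justify that the depth $M_0$ assigns to an $m$‑replete bi‑theory is exactly $m$ (the ``this gives us automatically a depth'' remark deserves a line), that $w^{\ast}$ genuinely occurs \emph{at} depth $k$ and not merely somewhere in $M_0$, and that the part of the tree below $w^{\ast}$ is rich enough to furnish the lower‑world witnesses demanded by the Volpin forcing clauses for $\land,\lor,\exists$ — all of which comes from iterating the Saturation Lemma downward through complexities $k,k-1,\dots$, but should be spelled out. A second point to settle is whether the frame of $M_0$ is to be literally a finite rooted tree, as the definition of Volpin frame requires, or merely the finite poset of all small $\le k$‑replete bi‑theories under $R_0$; if the former, one passes to the usual tree‑unravelling, under which forcing is invariant, so nothing changes. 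Lastly, the hypothesis that $\phi\in\text{k-CF}(L_{\Omega+S})$, i.e. that $\phi$ has complexity $\le k$ (and that ``derivable within $k$ steps'' refers to the same normalized cut‑free calculus used throughout), is implicit in the statement and ought to be made explicit, since otherwise $\phi$ is not even in the scope of the $k$‑replete bi‑theories and the claim must be restricted or rephrased.
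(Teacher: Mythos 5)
Your proposal is correct and follows essentially the same route as the paper's own proof: contraposition, the seed bi-theory $(\emptyset,\emptyset,\{\phi\})$ (you add the signature constants $S_\Omega$, which is harmless), saturation to a $k$-replete world with $\phi$ in the $\Delta$-component, and then the Main Semantic Lemma to conclude that this world fails to force $\phi$ at depth $k$. The additional bookkeeping points you flag --- that $\phi$ must have structural complexity at most $k$ to lie in the scope of the bi-theories, that the depth of the saturated world must genuinely be $k$, and that $M_0$'s frame must be (unravelled into) a finite rooted tree --- are real gaps the paper leaves implicit, and spelling them out only strengthens the argument.
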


\begin{proof}
We will prove this by contra-position. Assume \( \phi \) is not derivable  within \( k \) steps. 

Given this, consider the bi-theory \( (\emptyset, \emptyset, \{\phi\}) \). Since \( \phi \) is not derivable, this bi-theory is \( k \)-consistent. 

Now, we saturate it using the Saturation Lemma tailored for \( k \)-consistency. This provides us with a world \( w = (S_0, \Gamma_0, \Delta_0) \) in \( M_0 \) such that \( \phi \) is in \( \Delta_0 \).

Given the construction of our canonical model and the Main Semantic Lemma (tailored for \( k \)-completeness), \( w \) does not force \( \phi \) at depth \( k \), i.e., \( w \nvdash_k \phi \).

Since \( w \) was constructed from a \( k \)-consistent bi-theory where \( \phi \) was not derivable, and in \( w \) \( \phi \) is not forced at depth \( k \), it follows that \( \phi \) is not universally true in all Esenin-Volpin models up to depth \( k \).

\end{proof}

\subsection{From axioms to theorems: going deeper}

Given a Esenin-Volpin model \( V \) where \( V \) verifies a finite list of axioms \( T \) at depth \( k_1 \), we aim to investigate the depth at which a formula \( \psi \) is forced, given a \( k_2 \)-step proof that \( T \implies \psi \).

1. By the properties of the Esenin-Volpin model, for each axiom \( \tau \in T \), \( \tau \) is forced at depth \( k_1 \) in \( V \).

2. Given the formal notion of forcing, for any proof of length \( n \) that \( \varphi \rightarrow \psi \), if \( \varphi \) is forced at depth \( k \), then \( \psi \) is forced at depth \( k+n \). Using this rule:

\begin{itemize}
    \item The base case is each \( \tau \in T \) which acts as our \( \varphi \) and is forced at depth \( k_1 \).
    \item As we progress through the \( k_2 \)-step proof, we increase our depth iteratively based on our forcing rule.
\end{itemize}

3. By combining the depths from our intermediate steps, the conclusion \( \psi \) is forced at depth \( k_1 + k_2 \) in \( V \).

Thus, \( \psi \) is forced in \( V \) at depth \( \leq k_1 + k_2 \) (possibly earlier).

\subsection{Taking stock: Deconstruction of  the Term Model}
\begin{quote}
Finitism is the
last refuge of the Platonist. \\
Ed Nelson, see \cite{Nelson}
\end{quote}
 
A key concept in our discussion is the notion of a \textbf{canonical model} for a k-consistent theory. This arises naturally from the proof of completeness: given a k-consistent theory, one can construct a canonical model for it. 

What is remarkable is its construction: \emph{it is like taking the usual term model, a classical object of study in model theory, and deconstructing it into finite fragments of bounded complexity}. These fragments correspond to the different 'levels' of the theory, each one being a consistent set of sentences involving only a limited number of terms. The canonical Esenin-Volpin model is then constructed by 'stacking' only those  finite fragments which are not too deep on top of each other, leading to a k-complete model for the original theory.

\begin{tcolorbox}[colback=black!5!white,colframe=black!75!black,title=Deconstructing N]
   In order to acquire a taste  for this deconstruction, we suggest focusing on the most famous term model in the worlds, $N$. Let us say that you operate within Robinson's Arithmetics $Q$, so you have only the basic operations available. If you try to assemble $N$ from $0$, you generate a list of closed terms, such as $\{ 0, S0, \dots, SS0 + SSSSSSSSSS0, \ldots \}$. As you keep on going, each stage is a node in the Esenin-Volpin frame, where the terms are not necessarily linearly ordered, in fact not even fully ordered. Only available computations decide what is identical, who comes first and who comes next, etc. Furthermore, the very terms which denote arithmetic entities are not given all at once, but are an act of creation (thus, assuming all primitive recursive functions as given as it is the case in Predicative Recursive Arithmetics (PRA) may be satisfying for the standard finitist, but certainly not here. In the end, what one is left with are \emph{finite fragments of the term model}, which may look quite different from the simple picture we have of $N$, or even its finite cuts. 
   
\end{tcolorbox}

\section{Esenin-Volpin Models in Arithmetics and Set Theory }

Armed with Esenin-Volpin structures, let us begin our modest steps into  ultrafinite mathematics. In the following we will restrict ourselves to basic arithmetic and basic set theory. A broader  exploration of the ultrafinite land is left to a future work.

As an appetizer, let us make a simple observation: suppose you work with PA, and you, like most of the math community, believe PA is consistent. In the new jargon, you can rephrase your legitimate belief  as saying: \emph{PA is k-consistent for an arbitrary large k}. Thus, nothing prevents you from picking your favorite large number k, and then create a Esenin-Volpin model of the truncated theory of HA, up to k. We shall not pursue this line of thoughts further for the time being, because it is worth a separate investigations: truncating HA o IZF, or other familiar theories, may reserve some surprises!

Before starting, it is worth highlighting two main strategies, both usable as magic wands, to turn familiar math theories  into manifestly inconsistent yet k-consistent ones:

\begin{tcolorbox}[colback=black!5!white,colframe=black!75!black,title=Miniaturizing Arithmetics or Set Theory]

    \begin{enumerate}
        \item Keeping induction, but adding a new unary predicate which satisfies some inductive steps but to which induction does not apply (This is Parikh's approach in the celebrated paper \cite{parikh}).
        \item Keeping the ground theory induction free, for instance, starting from  Q, and adding some large number (or set) and postulating its inaccessibility.
    \end{enumerate}
\end{tcolorbox}

The key difference between these strategies lies in the Induction Principle: in Ultrafinite Land, Induction is not necessarily banished, rather,  one has to understand it from a different angle: Induction  essentially means -you can have a free lunch for as long as you wish-.  In a world where everything burns resources, it is nice to have it, of course. But it comes at a price, a subtle one: \textit{IP tends to flatten structures}. For instance, if we work in arithmetic and we do not use any induction, comparison between closed terms requires computations, The accepted fact that every closed term can be reduced to its canonical form $SSSSS \ldots 0$ is not necessarily true. So, if we opt for (some) induction, we can overlay our structures with a feasibility predicate which does not obey induction. Or, we may decide to  strip away induction altogether. 

In the next sub-section, we will adopt the second strategy for arithmetics and the first one for a toy set theory. It goes without saying that there could be other ways beyond these two for "doing the trick".

\subsection{Graham's Arithmetic and Esenin-Volpin Models}

We consider a variant of Robinson's Arithmetic $Q$ we shall refer to as  GRAHAM, which is simply $Q$  extended to include all the recursive definitions necessary to talk about $g_{64}$, where $g$ denotes Graham's infamous number. Here is Pakhomov succinct definition:

\begin{definition}[\textbf{Graham Arithmetics}]
Graham Arithmetics, denoted as \(\mathcal{G}\), extends the base system \(Q\) by introducing an additional function \(x \uparrow^y z\) along with the following axioms:
\begin{align*}
    x \uparrow^y 0 &= x, \\
    x \uparrow^0 S(y) &= x \cdot (x \uparrow^0 y), \\
    x \uparrow^{S(z)} S(y) &= x \uparrow^z (x \uparrow^{S(z)} y).
\end{align*}
It should be noted that in this context, \(x \uparrow^y z\) represents the notation \(x \uparrow \dots \uparrow y+1\) with \(z+1\) arrows, instead of \(x \uparrow \dots \uparrow y\) with \(z\) arrows. The change in notation stems from the fact that in \(Q\), we initiate natural numbers with \(0\) as opposed to \(1\).

The numbers \(g_n\) are  represented by the terms:
\begin{align*}
    g_1 &= 3 \uparrow^{3} 2, \\
    g_{n+1} &= 3 \uparrow^{g_n} 2.
\end{align*}
\end{definition}

Now, the k-consistent arithmetic theory can be formally expressed as GRAHAM along with an additional statement, stating that Graham number is inaccessible,  $\forall n, n < g_{64}$. Let us call it $GRAHAM(g_{64})$. 

While this theory is  classically inconsistent, it appears to lack short proofs of inconsistency without a direct appeal to induction. In the Mathoverflow post (see \cite{Mathoverflow1} ) where the author introduced $GRAHAM(g_{64})$ and asked for an estimate of the length of proofs of its inconsistency, two experts, Emil Jerabek and Fedor Pakhomov, provided excellent answers. 

Let's visually represent a (tiny portion of) the canonical Esenin-Volpin model for GRAHAM. Consider the following diagram:

\begin{figure}[!htb]
\centering

\begin{tikzpicture}
    \node[draw,circle] (w1) at (0,0) {$\{0\}$};
    \node[draw,circle] (w2) at (2,2) {$\{0, 1\}$};
    \node[draw,circle] (w3) at (-2,2) {$\{0, 2\}$};
    \node[draw,circle] (w4) at (4,4) {$\{0, 1, 2\}$};
    \node[draw,circle] (w5) at (0,4) {$\{0, 1, 2^2\}$};
    \node[draw,circle] (w6) at (-4,4) {$\{0, 2, 2^{2^2}\}$};
    \node[draw,circle] (w7) at (6,6) {$\{0, 1, 2, 2^{2^2}\}$};
    \node[draw,circle] (w8) at (0,8) {$\{0, 1, 2^{2^{2^2}}, g_{64}\}$};

    \draw[->] (w1) -- (w2);
    \draw[->] (w1) -- (w3);
    \draw[->] (w2) -- (w4);
    \draw[->] (w2) -- (w5);
    \draw[->] (w3) -- (w6);
    \draw[->] (w4) -- (w7);
    \draw[->] (w5) -- (w8);
\end{tikzpicture}

\caption{A tiny  portion of the canonical model for $GRAHAM(g_{64})$. As Tetartion is available, closed terms based on this fast growing function crop up here and there. The theory essentially asserts that $g_{64}$ acts as an infinite number. }
\label{fig:simple_diagram}
\end{figure}
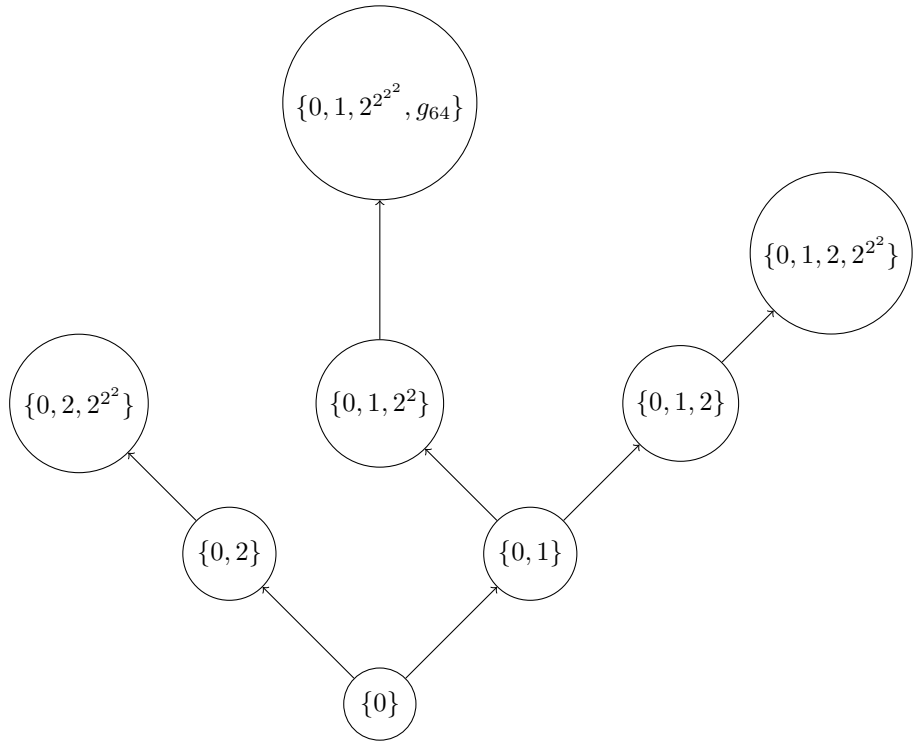

So, how big is $k$ for $GRAHAM(g_{64})$ ?
Pakhomov's estimate for  k is this: \\

\[
 \boxed{(\log_2^*(g_{64}))^{1/N} \leq  k \leq (\ln^*(g_{64}))^{N}}
 \]

where $\ln^*(x)=\min\{n\mid \log_2^n(x)<0\}$
 and N is some reasonable small integer (which could be figured out by a careful examination of the proof).

Incidentally, the proof is interesting on two counts:

\begin{itemize}
\item the Graham term can be replaced as wished, and the proof still stand. So, for instance, one couple replace it with $g_{g_{64}})$ or even more outrageously large integers expressible via the upper arrow or even faster growing primitive recursive functions. 

\item The proof itself uses a model theoretic approach based on  \textbf{fulfillment sequences}, see \cite{Quinsey} , by showing that the theory can be fulfilled, and then proceeding to shorten the fulfilling sequence till the desired minimum length. This is very interesting, because it looks like as if fulfilling sequences are special Esenin-Volpin structures (the exact relation between them is still to be determined). The completeness theorem of the previous section together with the shortening technique, could be used to gauge the consistency radius of an inconsistent theory via model theoretical reasoning. 
\end{itemize}
Even in its simplicity, Graham arithmetics and its Esenin-Volpin models can teach us something: observe that thanks to the powerful tetartion notation, even term representing almost unimaginable numbers appear quite soon in the model.

Of course manipulating, comparing those terms is a completely different matter: at some point in the model two such terms may be incomparable, simply due to the fact that the computation required is too large. 
\subsection{Esenin-Volpin Models of Feasible Set Theory}
Disclaimer: this sub-section is an appetizer at best. A full discussion of set theory within the context of the ultra-finite, due to the richness of the topic, is no minor task. So, we shall opt for a little toy theory, which hopefully shall provide a foretaste of things to come (incidentally, there are other candidates for feasible set theory most notably \cite{lavine}). 

 \textbf{Feasible Set Theory} (FIZF) is a modification of Intuitionistic Zermelo Fraenkel set theory which introduces a new monadic predicate \( F(x) \) to denote the feasibility of the set \( x \). This adaptation is aimed at capturing the notion of sets that can be concretely realized or understood within certain computational boundaries.

FIZF incorporates axioms which ensure that some constructions  performed using feasible sets will also result in other feasible sets.
The absolute minimum is to chose empty set, union, pairing, as they are  the tamest ones. Notice though that the two combined can still show, one step at the time, that \textit{standard finite ordinals are feasible}.  
\begin{enumerate}
    \item \textbf{Empty Set:}  \(F( \emptyset ) \)
    \item \textbf{Union:} If \( F(a) \) and \( F(b) \) , then \( F(a \cup b) \) .
    \item \textbf{Pairing:} If \( F(a) \) and \( F(b) \), then \( F(\{a, b\}) \).

\end{enumerate}

Admittedly, FIZF,  as defined thus far, does not go too far in the vast set theoretic realm. But if we add stability of power set with respect to feasibility, we already cover some ground (note in passing that this addition is like stipulating that the exponential is a feasible operation in arithmetics)
\begin{enumerate}
\item \textbf{Power set:} If \( F(a) \), then \( F(\mathcal{P}(a)) \) 
\end{enumerate}

Furthermore,  we stipulate that certain extraordinarily large yet finite sets are not feasible (we shall recycle Graham's number from the previous sub-section):

\begin{itemize}
    \item \( \neg F(g_{64}) \), ie Graham's number  is not feasible. This underpins the ultrafinitist perspective where even certain finite sets might be beyond concrete realization.
\end{itemize}

This modification of IZF results in an inconsistent theory under classical and intuitionistic  logic, yet FIZF remains $k$-consistent for a very large $k$, essentially via Parikh's arguments in \cite{parikh} (key point: F(x) is a new monadic predicate, not expressible in standard set theory. Ordinal induction does not apply to F(x) ).

As such, we can construct a canonical Esenin-Volpin model for FIZF (actually for a finite fragment of it), whose leaf nodes are somewhat reminiscent of the truncated cone in the von Neumann hierarchy that extends up to the ordinal corresponding to $g_{64}$.  Several sets in the truncated cone will be unfeasible, unless we decide to power up our set theory with the closure of the monadic predicate with respect to other axioms of IZF, such as replacement. The little model will also contain a plethora of sets living above \( g_{64} \), somewhat playing the role of  infinite sets. We shall return to our feasible set theory in \cite{Mannucci4}, where the focus will be on Cantor's infamous definition of infinite set.

\begin{figure}[!htb]
\centering

\begin{tikzpicture}

\draw (0,0) -- (5,7) -- (-5,7) -- cycle;

\foreach \y in {0.5,1.5,2.5,3.5}
    \draw[gray,very thin] (-5*\y/7, \y) -- (5*\y/7, \y);
\foreach \y in {5.5,6.5}
    \draw[gray,very thin, dashed] (-5*\y/7, \y) -- (5*\y/7, \y);

\node[left] at (-5*0.5/7,0.5) {$\varnothing$};
\node[left] at (-5*1.5/7,1.5) {$2$};
\node[left] at (-5*2.5/7,2.5) {$3$};
\node[left] at (-5*3.5/7,3.5) {$\ldots$};
\node[left] at (-5,5) {$g_{64}$};
\node[left] at (-5,6) {$\vdots$};

\draw[black, very thick] (0,0) -- (0,7);

\node[align=center] at (0,6) {transfinite\\ below \( \omega_0 \)};

\fill[black] (-1,2) circle (0.1);
\fill[black] (1,3) circle (0.1);
\fill[black] (-2.5,4) circle (0.1);

\end{tikzpicture}

\caption{The truncated universe up to Grapham's number. Every set above the graham level is effectively transfinite. Notice that the feasible part has "holes", meaning sets which are not forced feasible inside the model}
\label{fig:simple_diagram}
\end{figure}
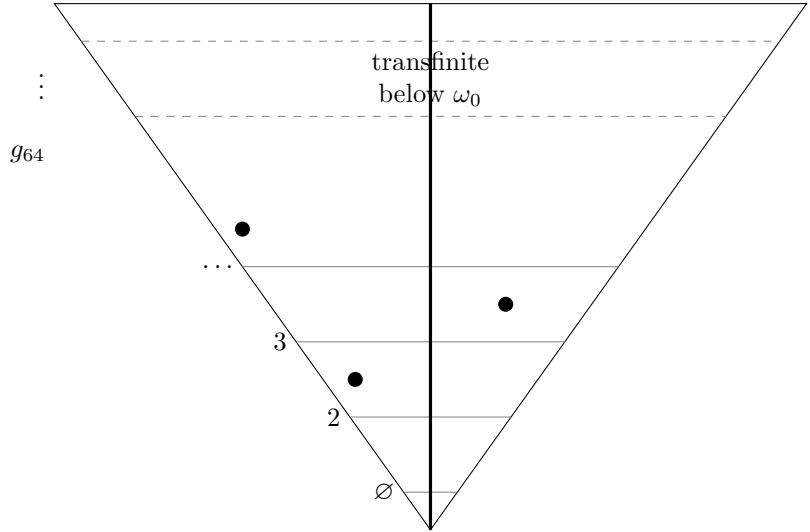

\section{ Fuzziness is in the Depth}
The Reader of \cite{MannucciCherubin} may be a bit confused: where are the fuzzy initial segments of arithmetics, where is  the semantics based on a fuzzification of Tarskian model theory? 
At first sight there is a certain disconnect. Actually, the "fuzziness" is,  as it were,  embedded in the very shape of Volpin forcing. How? The key is to think of the depth at which a certain sentence is forced.

Given a Esenin-Volpin model of maximal  height \( d \) and a leaf node, we associate to each formula  \(\phi\) forced somewhere along the path leading to the leaf a value, which corresponds to the "inverted" minimal depth at which it is forced. Specifically, if \(\phi\) is forced at depth \( d_{\phi} \), the associated value is given by \( value(\phi) = \frac{d - d_{\phi}}{d} \), so that the values range from \( 0 \) to \( 1 \).
The intuitive interpretation of such a value is: the higher the formula shows up, the more costly it is, and therefore the less valid (as a reference, suppose someone would come along and tell you that he/she has a proof of the Riemann Hypothesis in $10^{10^{10}}$ pages. Your immediate reaction would likely be: this fellow is a phony, or simply you will wait till someone else will exhibit a more manageable proof).

\subsubsection*{Composite Formulas}

\begin{itemize}
    \item \textbf{Conjunction (\(\land\))}: If \(\phi\) is forced at depth \( d_{\phi} \) and \(\psi\) at \( d_{\psi} \), then \( \phi \land \psi \) is forced at depth \( \max(d_{\phi}, d_{\psi}) + 1 \). \\
    Associated value: \( value(\phi \land \psi) = \frac{d - (\max(d_{\phi}, d_{\psi}) + 1)}{d} \).

    \item \textbf{Disjunction (\(\lor\))}: If either \(\phi\) is forced at depth \( d_{\phi} \) or \(\psi\) at \( d_{\psi} \), then \( \phi \lor \psi \) is forced at depth \( \min(d_{\phi}, d_{\psi}) + 1 \). \\
    Associated value: \( value(\phi \lor \psi) = \frac{d - (\min(d_{\phi}, d_{\psi}) + 1)}{d} \).

 \item \textbf{Implication (\(\rightarrow\))}: Let the value of \( \psi \) be \( d_{\psi} \) and the value of \( \phi \) be \( d_{\phi} \). Then, the value of \( \phi \rightarrow \psi \), denoted as \( value(\phi \rightarrow \psi) \), is determined by the comparison of \( d_{\phi} \) and \( d_{\psi} \). If \( d_{\phi} > d_{\psi} \), then \( value(\phi \rightarrow \psi) = d_{\phi} \). If \( d_{\phi} \leq d_{\psi} \), then \( value(\phi \rightarrow \psi) = d_{\psi} \). \\
    Associated value: \( value(\phi \rightarrow \psi) = \begin{cases} d_{\phi} & \text{if } d_{\phi} > d_{\psi}, \\ d_{\psi} & \text{if } d_{\phi} \leq d_{\psi}. \end{cases} \)
.

    \item \textbf{Universal Quantifier (\(\forall\))}: 
    If for every \( a \in D_w \), \( \phi(a) \) is forced at some depth \( d\), then the universal quantification \( \forall x \, \phi(x) \) is forced one step up\\
Associated value: \( value(\forall x \, \phi(x)) = \frac{d - (1 + \max \{d_{\phi(a)} | a \in D_w\})}{d} \).
\end{itemize}

Summing up, to derive a fuzzy model from a Esenin-Volpin one, we follow the  series of steps listed below:

\begin{enumerate}
    \item \textbf{Selection of a Leaf Node:} Choose a leaf node from the Esenin-Volpin model. The leaf represents a consistent snapshot of the logical world.
    
    \item \textbf{Defining the Domain:} The domain of our fuzzy model is the domain of the chosen leaf node, providing a clear set of terms.
    
    \item \textbf{Assigning Fuzzy Values:} Utilize the depth of each term and sentences that are forced at the leaf to provide them a value in the fuzzy model. The depth, in essence, offers a "degree of truth" for each proposition.
\end{enumerate}

The result is a fuzzy model that captures the graded nuances of the logical world as represented in the Esenin-Volpin model (see Figure 3). The Reader may still be a bit unhappy: whereas in the Manifesto there were many nuances of fuzziness, here the growth is linear. Why so? To begin with, our notion of complexity of proofs is the simplest one, and thus are its associated Esenin-Volpin models. The purpose of this article is simply to showcase what can be done, but it is far from  the most general setup one can conceive. Once a notion of \textbf{Truth Transfer Policy} (see again the Manifesto) is in place, it is also possible to generalize Esenin-Volpin models to provide for a more flexible notion of truth propagation, and thus of fuzziness. 

\usetikzlibrary{positioning}

\begin{figure}[!htb]
\centering

\begin{tikzpicture}

\node[circle, draw, label=above:Root] (root) at (0,0) {};
\node[circle, draw] (a) at (-1.5,1) {};
\node[circle, draw] (b) at (1.5,1) {};
\node[circle, draw] (c) at (-2.5,2) {};
\node[circle, draw] (d) at (-0.5,2) {};
\node[circle, draw] (e) at (0.5,2) {};
\node[circle, draw] (f) at (2.5,2) {};
\node[circle, draw, fill=black] (leaf) at (-3,3) {}; 

\draw (root) -- (a) -- (c) -- (leaf);
\draw (root) -- (b) -- (f);
\draw (a) -- (d);
\draw (b) -- (e);

\begin{scope}[yshift=8cm, xshift=-3cm]
    \draw[fill=gray!20, opacity=0.7] (-3,-1) circle [radius=2];
    \draw[fill=gray!30, opacity=0.7] (-3,-1) circle [radius=1.5];
    \draw[fill=gray!40, opacity=0.7] (-3,-1) circle [radius=1];
    \draw[fill=gray!70, opacity=0.7] (-3,-1) circle [radius=0.5];

    \node at (-3, 2) {Fuzzy Model Expansion};
\end{scope}

\draw[dashed, black, thick] (leaf) -- (-5,5);

\end{tikzpicture}

\caption{ Expansion of a leaf node. The concentric circles represent degrees of fuzziness which grows with depth at which elements and properties arise}
\label{fig:simple_diagram}
\end{figure}
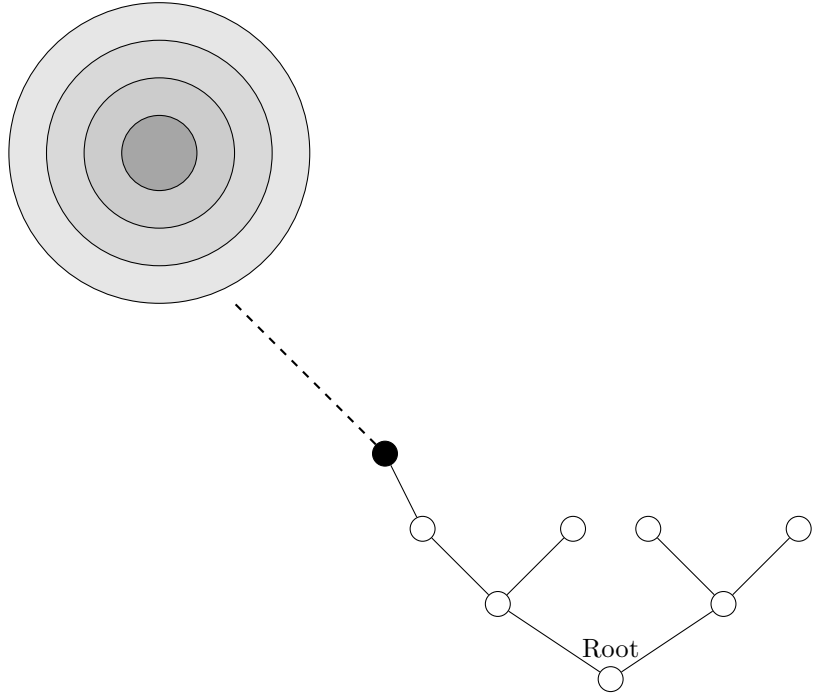

\section{Future Work}

Our investigation of Esenin-Volpin models in this paper sets - we hope- the groundwork for several exciting avenues for future exploration. We outline just a few topics  which seem to us of relevance.

\subsection{Chains of Inaccessibles  in Ultra-finite Arithmetic and Set Theory}
The ultimate dream behind this Program is to miniaturize Cantor's extraordinary theory of the transfinite (what we have referred  to as \textbf{Cantorian Nanotechnology}), all the while  remaining below $\aleph_0$. To this effect, \emph{one must be able to reproduce different orders of infinity}, of course in  the restricted sense described here. 
In creating a theory reminiscent of the various orders of inaccessibility but within the confines of the finite, we could consider extending Robinson's arithmetic \( \mathcal{Q} \) by introducing axioms that hint at the "inaccessibility" of certain large numbers through specific growing functions.

Consider a hierarchy of Knuth's up-arrow notations (any fast growing hierarchy would do, but as we have already used this one implicitly in Graham's arithmetic, let us stick to it for the time being. We define a sequence of numbers, named \(k_i\), as follows:

\begin{enumerate}
    \item \(k_0 = 3 \uparrow\uparrow 10\) — This value is set such that \(k_0\) is unreachable by \(3 \uparrow n\), since it is defined in terms of the next notation.
    \item For all \(i > 0\), \(k_i = 3 \uparrow^{i+2} k_{i-1}\). This recursively ensures that \(k_i\) is out of the reach of both \(3 \uparrow^{i+1} n\) and all its preceding notations.
\end{enumerate}

Now, our finite theory, \( \mathcal{T} \), is articulated through the following axioms:

\[
\mathcal{T} = \mathcal{Q} \cup \bigcup_{i=0}^{N-2} \left\{ \forall n \, (3 \uparrow^{i+1} n < k_i) \right\}
\]

Here, each axiom \( \forall n \, (3 \uparrow^{i+1} n < k_i) \) posits that the \(i^{th}\) up-arrow notation is incapable of reaching its respective "inaccessible" number \(k_i\). This provides us with a finite list of named inaccessible numbers, where each one is defined using the next up-arrow notation applied to the previous inaccessible, thereby ensuring a hierarchy of rapid growth.

The true consistency strength  of this theory (and thus the depth of the Volpin model needed to validate it) remains open for rigorous analysis.

For the time being, we just want to emphasise that this process of miniaturization  is rather simple-minded. To be able to squeeze the full   (or even a substantial part of ) theory of Large Cardinals, requires a substantial work. For instance, some of the cardinals are defined by sophisticated infinite combinatorics properties, so a vocabulary should be built to translate those properties  down to our minuscule world. Other cardinals are definite in terms of meta-mathematical  properties, and to replicate them we need to develop a meta-mathematics inside Volpin structures, a no minor feat.

\subsection{Esenin-Volpin Models as  boards for Bounded Resource Logic Games}

Logic game semantics provides a dynamic approach to understanding the intricate relationships between structures and formulas in model theory. Central to this approach are two-player games, where the players, typically referred to as the Verifier and the Falsifier, aim to establish the truth or falsehood of a formula in a given structure. Esenin-Volpin models, inspired by an ultrafinitistic viewpoint, present a unique twist on traditional model theory. Rather than utilizing standard infinite game semantics, one can delve into this ultrafinitistic landscape using \textbf{bounded resource logical games}. Such games restrict the usual infinite or unbounded strategies, reflecting the underlying philosophy of Esenin-Volpin models centering on finite  albeit possibly very large, constructs. By tailoring logic games to incorporate bounded resources, we can pave the way to an enriched understanding of the ultrafinite.

\subsection{The  Paradoxes of Strict Finitism}

\begin{quote}
"Weir argues (2010; 2016) that finitist formalism is not only extremely radical, it is incoherent. The reason is the all-pervasive role of abbreviation which generates complex tokens where most of their sub-parts will never exist, for example abbreviations which create numerals naming (speaking with the platonist) arbitrarily high numbers. As a result, strict finitist specifications cannot be given in the usual inductive fashion, as the intersection of all inductive sets containing the base set and closed under the complexity-forming operations. And this in turn means we cannot prove even very simple facts about wffs and proof." see \cite{weir}
\end{quote}, 

Against the critique above, Esenin-Volpin models offer an interesting approach. Instead of providing a rigid limit to feasibility, they incorporate a dynamic horizon which is capable of expanding to accommodate mathematical and meta-mathematical constructions of arbitrary complexity.

In this light, the apparent paradoxes of strict finitism, such as the problems with abbreviation and the existence of simple sentences with large finite proofs, can be understood and managed within the flexible and hierarchical framework provided by Esenin-Volpin models. 

To fully implement this solution, it is clear what the first steps should be: \textbf{do meta-mathematics inside Volpin models}.

\subsection{Algebraic and Modal logic of  feasible provability}
It is well known that intuitionistic logic can be reinterpreted as an extension of classical logic via the modality $S_4$: $\Box \psi$ will then mean $\psi$ is true constructively, ie there is a  valid procedure to validate the sentence. We have seen that, for an assigned measure of complexity $k$, $T \vdash _{k} \phi$ is equivalent to $T \vDash _{k} \phi$ with respect to the Esenin-Volpin interpretation. It would be expected that one can have a collection of modal operators whose intended meaning is- I can feasibly prove $\phi$ with a k-proof-. There is a problem though: whereas, in algebraic terms, the modality for intuitionism is regular (ie preserves finite meets) , that is not anymore true here. We are stepping into a new territory, as far as algebraic logic goes. This same issue becomes pertinent in the next subsection as well.

\subsection{ From Esenin-Volping Forcing to The Ultra-finite Universe of Discourse}

The development of a categorical version of Esenin-Volpin semantics presents an intriguing possibility. While Kripke semantics for intuitionistic logic can be subsumed within  topos theory semantics, a direct adoption  is not  suitable due to topos' internal (higher order) intuitionistic logic. Thus, a new categorical framework that accommodates the distinct nature of Esenin-Volpin features, notably the new forcing rules,   needs to be conceptualized and implemented.

One possible approach is to generalize Esenin-Volpin models by creating presheaves from a small finite category into the category of finite sets of bounded maximal size. However, this generalization introduces the challenge of redefining operations to align with the new forcing conditions, as the standard manipulations on sub-objects would not suffice (just think of the basic connectives: to mimic the Volpin forcing rule for the meet, standard sub-objects pullbacks won't do). 

The next installment in the series (\cite{Mannucci}) will strive to create a ultrafinite universe of discourse, albeit from a somewhat different angle: leveraging the Effective Topos and categorical realizability, rather than coming from Kripke-Cohen forcing.

\section*{Acknowledgments}

I would  like to dedicate this work to the memory of the late Professor David Isles (1935-2023), a logician with a strong ultrafinitistic bend, and a contributor with several useful insights (see for instance \cite{Isles1} and \cite{Isles2}. He spent a long time striving to turn the grandiose intuitions of Esenin-Volpin into actual logic ). Isles, a very kind and humble man, also shared with me via email key insights and questions, which found their way into my work. Of course, all responsibilities and possible misunderstandings  are entirely mine.

\end{document}